\newtheorem{theorem}{Theorem}[section]
\newtheorem{definition}[theorem]{Definition}
\newtheorem{cor}[theorem]{Corollary}
\newtheorem{pro}[theorem]{Proposition}
\DeclareMathOperator{\Real}{Re}
\DeclareMathOperator{\Imagine}{Im}
\numberwithin{equation}{section}
\newtheorem{klem}[theorem]{Key Lemma}
\theoremstyle{definition}
\newtheorem{example}[theorem]{Example}}
\begin{document}
\title{\vspace{-1.2cm}  \bf  Equality in Suita's conjecture and metrics of constant Gaussian curvature \rm}
\author{Robert Xin Dong}
\date{}
\maketitle

\begin{abstract} 
Without using the $L^2$ extension theorem, we provide a new proof of the equality part in Suita's conjecture, which states that for any open Riemann surface admitting a Green's function, the Bergman kernel and the logarithmic capacity coincide at one point if and only if the surface is biholomorphic to a disc possibly less a relatively closed polar set. In comparison with Guan and Zhou's proof, our proof essentially depends on Maitani and Yamaguchi's variation formula for the Bergman kernel, and we explore the harmonicity in such variations. As applications, we characterize the above surface by the constant Gaussian curvature property of the Bergman kernel or metric, and also find the relations with disc quotients. Additionally, we obtain results on planar domains without the Bergman-completeness assumption.

\end{abstract}

\renewcommand{\thefootnote}{\fnsymbol{footnote}}
\footnotetext{\hspace*{-7mm} 
\begin{tabular}{@{}r@{}p{16.5cm}@{}}
& Keywords. Bergman kernel, Bergman metric, Bergman representative coordinate, Green's function, K\"ahler-Einstein metric, logarithmic capacity, open Riemann surface, Suita conjecture, Suita metric\\
& Mathematics Subject Classification. Primary 32A25; Secondary 30C40, 31A15, 30C85, 32D15
\end{tabular}}

\section{Introduction}

Let $X$ be a Riemann surface, and let $w$ be a local coordinate on a neighborhood $V$ of $z_0 \in X$ satisfying $w(z_0) = 0$.
 Let $\kappa_X$ be the Bergman kernel for holomorphic (1,0) forms on $X$.
We define
$$ 
K(z)|dw|^2 := \kappa_X(z)|_{V }.
$$ 
If $X$ is an open Riemann surface admitting a Green's function $G$, 
 the logarithmic capacity $c_{\beta}$   is locally defined as
\begin{equation} \label{cap}
c_{\beta}(z_0):=\exp \lim_{z\to {z_0}} \left\{G(z, z_0)- \log |w(z)|\right\}.
\end{equation}
Suita in \cite{Su} asked about the precise relations between the above two quantities and  conjectured that {\it on any open surface $X$ as above,   
\begin{equation} \label{leq}
\pi K(z_0) \geq c_{\beta}^2 (z_0),
\end{equation} 
and equality in \eqref{leq} holds true at $z_0\in X$ if and only if $X$ is biholomorphic to a disc possibly less a closed set of inner capacity zero}.  A closed set of inner capacity zero is a polar set (local singularity set of a subharmonic function). 

\medskip

The relations between Suita's conjecture and the Ohsawa-Takegoshi $L^2$ extension theorem \cite{OT} were first observed by Ohsawa \cite{O95, O98} and contributed by a number of mathematicians (see the works of Siu \cite{Si}, Berndtsson \cite{Be0}, Chen \cite{Ch}, B\l{}ocki \cite{Bl07} and Guan-Zhou-Zhu \cite{GZZ, GZZ1}). The $L^2$ extension theorem has many important applications in complex algebraic and differential geometry and several complex variables. For example, it was explored to prove the famous invariance of plurigenera \cite{Si2, Si02, Pa}, Demailly's approximation \cite{Dem92, DK, Dem12}, strong openness conjecture \cite{GZ4, GZ5}, etc. See also the recent survey papers of B\l{}ocki \cite{Bl}, Ohsawa \cite{O20, O20B}, Zhou \cite{Z}, and the references therein.

\medskip

For more than forty years, the Suita conjecture remained unsolved until the breakthrough by B\l{}ocki \cite{Bl13}, who proved the inequality part of the conjecture for bounded planar domains. Later, Guan and Zhou \cite{GZ1, GZ3} proved both the inequality and equality parts of the conjecture for open Riemann surfaces. The proofs in \cite{Bl13, GZ1, GZ3} were based on the optimal constant versions of the $L^2$ extension theorems, obtained by introducing the crucial auxiliary weight functions. Afterwards, in \cite{Bl14}, B\l{}ocki gave a much simpler proof of the inequality part of the Suita conjecture using the tensor power trick. In connection with this, Lempert discovered that B\l{}ocki's main result in \cite{Bl14} could also follow from a variational approach as sketched in \cite{Bl15}. Eventually, such a completely different and remarkable approach was further developed in \cite{BL} by Berndtsson and Lempert, who gave yet another proof of the inequality part of the Suita conjecture using the plurisubharmonic variation properties of Bergman kernels \cite{MY, B1}, and more strikingly deduced the sharp Ohsawa-Takegoshi extension theorem from Berndtsson's important positivity result on the direct image bundles \cite{B2}. Conversely, Guan and Zhou in \cite{GZ3} showed that the sharp $L^2$ extension theorem gives a proof of the above variation properties of Bergman kernels. Last but not least, in \cite{Bl17}, B\l{}ocki gave a purely one-dimensional self-contained proof of the inequality part of the conjecture.

\medskip

The equality part of the Suita conjecture is much harder. To the author's knowledge, there is no published proof except Guan and Zhou's one in \cite{GZ3}, where a very general form of the $L^2$ extension theorem with sharp constant was used. We believe that it is fundamentally interesting to understand this beautiful one dimensional complex analysis result. Recently, it was observed in \cite{DTZ} that in Guan and Zhou's final step of proving the conjecture, the use of the analytic capacity is not necessary.

 \medskip

Our first motivation  is to provide a new proof of the Equality Part in Suita's conjecture, without using the powerful $L^2$ extension theorem. Inspired by Berndtsson and Lempert's work in \cite{BL}, our proof essentially depends only on Maitani and Yamaguchi's variation formula for the Bergman kernel in \cite{MY}. In fact, the variational idea was already suggested in Suita's original paper \cite[Sec. 6]{Su}. 
For an open Riemann surface $X$ admitting a Green's function $G(z, z_0)$ with a pole $z_0\in X$, 
and for each fixed $\tau\in \mathbb C$ such that $\Real \tau \leq 0$, define the sub-surface 
\begin{equation} \label{X_tau} 
X_{\tau}:=\{z\in X\, | \, G(z, z_0)<\Real \tau\},
\end{equation} 
and let $\kappa_{X_\tau}$ be the Bergman kernel for holomorphic (1,0) forms on $X_\tau$.
Let $w$ be a local coordinate on a neighborhood $V $ of $z_0 $ satisfying $w(z_0) = 0$, and we define  
$$ 
K_\tau(z)|dw|^2 := \kappa_{X_\tau}(z)|_{V }.
$$  

\begin{theorem} \label{main} Let $X$ be an open Riemann surface admitting a Green's function.   
Then, \eqref{leq} holds true. Moreover, the following conditions are equivalent:
\begin{enumerate}

\item [$(i)$] $\pi K(z_0) = c_{\beta}^2(z_0)$;

\item [$(ii)$] $X$ is biholomorphic to a disc possibly less a relatively closed polar set;

\item [$(iii)$] the minimal $L^2$ norm of holomorphic 1-forms $F$ on $X$ such that $F|_{z_0}=  c_{\beta}(z_0) dw|_{z_0} $ and $||F||_{L^2}\leq \sqrt {\pi}$ is $\sqrt {\pi}$; 

\item [$(iv)$] $\log K_{\tau}(z_0)$ is harmonic in $\tau$, and equivalently, linear in $\Real \tau$.

\end{enumerate}
\end{theorem}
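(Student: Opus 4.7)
The theorem splits into formal parts---(ii)$\Rightarrow$(i) and (i)$\Leftrightarrow$(iii)---and two substantial equivalences (i)$\Leftrightarrow$(iv) and (iv)$\Rightarrow$(ii), with \eqref{leq} emerging from the argument for (i)$\Leftrightarrow$(iv). For (ii)$\Rightarrow$(i) I would compute directly on $\mathbb{D}$ (where $G(z, 0) = \log|z|$, $c(0) = 1$, and the Bergman kernel at $0$ is $1/\pi$, so $\pi K = c^2 = 1$) and invoke the removability of closed polar sets for both $L^2$ holomorphic 1-forms and for Green functions. For (i)$\Leftrightarrow$(iii), the extremal characterization \eqref{sup} gives that the minimum of $\|F\|_{L^2}^2$ over holomorphic 1-forms with $F(z_0) = c(z_0)\,dw(z_0)$ is $c(z_0)^2/K(z_0)$, which equals $\pi$ precisely when $\pi K(z_0) = c(z_0)^2$.

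\textbf{The inequality and (i)$\Leftrightarrow$(iv) via Maitani--Yamaguchi.} Set $\psi(\tau) := \log K_\tau(z_0) + 2\Real\tau$ on $\{\Real\tau \leq 0\}$. Maitani--Yamaguchi makes $\log K_\tau(z_0)$ subharmonic in $\tau$, so $\psi$ is subharmonic, and since $X_\tau$ depends only on $t := \Real\tau$, the function $\psi$ is convex in $t$ alone. The near-pole expansion $G(z, z_0) = \log|w(z)| + \log c(z_0) + o(1)$ read off from \eqref{cap} shows that as $t \to -\infty$ the sub-surface $X_t$ is closely approximated by the disc of radius $e^t/c(z_0)$, whose Bergman kernel at $z_0$ is $c(z_0)^2 e^{-2t}/\pi$, and therefore $\psi(t) \to L := 2\log c(z_0) - \log\pi$. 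A convex function with a finite limit at $-\infty$ must be nondecreasing, so $\psi(t) \geq L$ for all $t \leq 0$; evaluating at $t = 0$ gives \eqref{leq}. Now (i) reads $\psi(0) = L$, and together with $\psi \geq L$ and convexity this forces $\psi \equiv L$ on $(-\infty, 0]$, so $\log K_\tau(z_0) = L - 2\Real\tau$ is harmonic in $\tau$, which is (iv). Conversely, a harmonic function of $\tau$ depending only on $\Real\tau$ is affine in $\Real\tau$, and the asymptotics at $-\infty$ pin it down uniquely, so (iv) recovers (i) at $\tau = 0$.

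\textbf{The hard direction (iv)$\Rightarrow$(ii) and the main obstacle.} Treat harmonicity of $\log K_\tau(z_0)$ as the equality case in Maitani--Yamaguchi's plurisubharmonicity. Their formula writes $\partial_\tau\partial_{\bar\tau}\log K_\tau(z_0)$ as a nonnegative boundary integral whose integrand measures the failure of the normalized extremals $F_\tau$ on $X_\tau$---the holomorphic 1-forms with $F_\tau(z_0) = dw(z_0)$ and $\|F_\tau\|_{L^2(X_\tau)}^2 = 1/K_\tau(z_0)$---to depend holomorphically on $\tau$; harmonicity forces this integrand to vanish identically. The plan is to use this vanishing to produce a single global holomorphic 1-form $F$ on $X$ with $F|_{X_\tau} = F_\tau$ for every $\tau \leq 0$, and then to assemble from $F$ and the Green function a biholomorphism $\Phi\colon X \to \mathbb{D}\setminus E$ for some closed polar $E$, giving (ii); concretely, I would show that the a priori multivalued map $\exp(G(z, z_0) + i\tilde G(z, z_0))$ is single-valued (its periods vanish from the absence of monodromy for $F$), univalent (by applying the established equivalence (i)$\Leftrightarrow$(iv) to every $X_\tau$), and has polar complement in $\mathbb{D}$ by uniqueness of the Green function on $\Phi(X)$. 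The genuinely delicate step, and the main obstacle of the whole argument, is the passage from vanishing of the Maitani--Yamaguchi integrand to the global $F$, without invoking the $L^2$ extension theorem that is the engine of Guan--Zhou's approach; my strategy is to read the vanishing as a Cauchy--Riemann condition satisfied by $F_\tau$ transversally on $\partial X_\tau$ and to patch compatibly along the exhaustion $\bigcup_{\tau\leq 0} X_\tau = X$, after which single-valuedness, univalence, and polarity of the complement should follow from uniqueness of the Green function and the subharmonic maximum principle.
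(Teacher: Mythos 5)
Your argument for the inequality and for the equivalences (i)$\Leftrightarrow$(iii) and (i)$\Leftrightarrow$(iv) is essentially identical to the paper's: the same extremal characterization for (iii), the same convexity-of-$\psi(\tau)=\log K_\tau(z_0)+2\Real\tau$ argument using Maitani--Yamaguchi, and the same asymptotics as $\Real\tau\to-\infty$. Your handling of (ii)$\Rightarrow$(i) via polar removability also matches. So far so good.

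The genuine gap is in your (iv)$\Rightarrow$(ii) step, and you have, in effect, identified it yourself: ``the passage from vanishing of the Maitani--Yamaguchi integrand to the global $F$'' is not a proof but a plan, and the strategy you sketch for it---reading off a Cauchy--Riemann condition on $\partial X_\tau$ and ``patching compatibly along the exhaustion''---would not close without a new idea. The paper's actual mechanism (its Key Lemma) is concrete and different in flavor. Assuming (i) (hence (iii) and (iv)), one first notes that (iii) applied fiberwise gives a \emph{unique} minimal $1$-form $F_\tau$ on each $X_\tau$. Then one invokes the explicit Maitani--Yamaguchi variational formula for $\partial^2 K_\tau(z_0)/\partial\tau\partial\bar\tau$: the boundary integral carries a factor $k_2(\tau,t)$ which vanishes because the total space $\mathcal X$ is cut out by $G(z,z_0)-\Real\tau<0$, a Levi-flat hypersurface. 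What survives is the interior term $\iint_{X_\tau}\bigl|\partial K_\tau(t,z_0)/\partial\bar\tau\bigr|^2\,dv_t$, and the harmonicity from (iv) converts this into an $L^2$ bound on $\partial K_\tau(t,z_0)/\partial\bar\tau$. One then forms $B_\tau:=-\bigl(\partial K_\tau(t,z_0)/\partial\bar\tau\bigr)\cdot\sqrt{\pi}/\sqrt{K(z_0)}\cdot e^{\tau+\bar\tau}\,du$, checks it satisfies exactly the value and norm constraints of the unique $F_\tau$, and uniqueness forces $B_\tau\equiv F_\tau$, i.e.\ the PDE
\[
\frac{\partial K_\tau(t,z_0)}{\partial\bar\tau}=-K_\tau(t,z_0),
\]
which one integrates to get the scaling identity $K(t,z_0)=K_\tau(t,z_0)\,e^{\tau+\bar\tau}$ with $K_\tau(\cdot,z_0)$ zero-free. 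This is the step your proposal does not supply: it is precisely where the \emph{uniqueness} from (iii) is consumed (you do not use (iii) in your sketch of (iv)$\Rightarrow$(ii) at all, whereas the paper's circle \eqref{circle} needs both (iii) and (iv)), and where the Levi-flatness of the fibration kills the boundary term.

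Your endgame is also different. You aim to directly construct a single-valued, univalent map $\exp(G+i\widetilde G)$ and verify that the complement of its image in $\mathbb D$ is polar. The paper instead takes the scaling identity above, specializes it to $\Real\tau\ll 0$ where $X_\tau$ is biholomorphic to a small disc and hence $K_\tau(t,z_0)$ is explicit, concludes that the natural candidate $F:=\sqrt{\pi}K(t,z_0)/\sqrt{K(z_0)}\,du$ coincides near $z_0$ with $df_0$ (where $\log|f_0|=G(\cdot,z_0)$ locally), and then cites Suita's own characterization $\pi K(z_0)=c_B^2(z_0)\iff$(ii) via the analytic-capacity result (Proposition~\ref{global} and Lemma~\ref{analytic}). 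That route avoids having to prove single-valuedness and univalence of $\exp(G+i\widetilde G)$ from scratch; those are exactly the points that would require the most work in your construction and which you leave as ``should follow.'' So: right skeleton for (i), (iii), (iv), but the Key Lemma---the computation turning uniqueness-of-extremals plus the MY formula into the $\bar\partial_\tau$-PDE---is the missing ingredient, and your alternative endgame shifts, rather than removes, the difficulty.
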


  \medskip

\noindent{}{\bf Remarks.}

\begin{enumerate}[label=(\roman*)]
 
\item [(a)] 
The equivalence between (i) and (ii) is exactly Guan and Zhou's theorem in \cite{GZ3}.
It can be easily seen that Condition (ii) implies the other three conditions above, since the possible polar part is negligible for $L^2$ holomorphic 1-forms.

\item [(b)]  In this paper, we first show that (i) is equivalent to each of (iii) and (iv). Then, we will give a purely variational proof of the fact that  (i) implies (ii) as follows:
 
\begin{equation} \label{circle}
 (i) \Longrightarrow
  \begin{array}{ll}
(iii)\\
\,\,+\\
(iv)\\
\end{array} \Longrightarrow \text{Key Lemma \ref{off}} \Longrightarrow (ii). 
\end{equation}

\item [(c)]  The applications of Theorem \ref{main} include the resolution of the so-called $L$-conjecture and the extended Suita conjecture, which was also given in \cite{GZ3}. For any bounded domain $D$ in $\mathbb C^n$, it is well known that at any $z\in D$, $K(z) \geq v^{-1}(D)$, where $v(\cdot)$ is the Euclidean volume. As another application of Theorem \ref{main}, the author and Treuer in \cite{DT} found a new characterisation of planar domains by the minimal point of the Bergman kernel, and proved that
  $
K(z_0)=v^{-1}(D)
$ at some $z_0\in D$
if and only if 
$
D$ is a disk centered at $z_0$ possibly less a relatively closed polar set. See also \cite{DTZ}.

\end{enumerate}

Our second motivation  is to give more applications of the Suita type result in the viewpoint of curvature properties. In the same paper \cite{Su}, Suita proved the identity
\begin{equation} \label{Curv} 
\frac{\partial ^2 (\log c_{\beta} )}{\partial w \partial \bar w}=\pi K,
\end{equation}
which gives the curvature interpretation of his conjecture for any hyperbolic Riemann surface.  
The Inequality Part in \eqref{leq} is equivalent to the fact that the Gaussian curvature of the Suita metric $c_{\beta}^2(z)|dw|^2$ always has an upper bound $-4$;
the Equality Part, which says that the curvature attains $-4$ at some $z_0 \in X$, guarantees that the surface $X$ necessarily satisfies Condition (ii). In the latter case, if the metric is additionally assumed to be complete, then $X$ must be simply connected. 

  \medskip

The Gaussian curvature of the Bergman kernel $K(z)|dw|^2$ on any Riemann surface satisfying Condition (ii) is    
identically $-4\pi$. Our next theorem shows that the constant curvature property determines the surface  up to biholomorphism.

\begin{theorem} \label{2nd} Let $X$ be a Riemann surface. Then, the Gaussian curvature of the Bergman kernel $K(z)|dw|^2$ 
is identically $-4\pi$  
if and only if $X$ is biholomorphic to a disc possibly less a relatively closed polar set.

  \end{theorem} 

The equivalent condition in Theorem \ref{2nd} further implies that the Gaussian curvature of the Bergman metric 
$g (z)|dw|^2$, defined in \eqref{metric}, is identically $-2$.
In \cite{Lu}, Lu proved his well-known uniformization theorem: a bounded domain in $\mathbb C^n$ with a complete Bergman metric of constant holomorphic sectional curvature is biholomorphic to the Euclidean ball. A general problem raised by Yau \cite[pp. 679]{Yau} is to characterize manifolds whose Bergman metrics are K\"{a}hler-Einstein. 
When the manifold is a smoothly bounded strictly pseudoconvex domain, Cheng conjectured that the Bergman metric is K\"{a}hler-Einstein if and only if the domain is biholomorphic to the ball. 
After the previous works of Fu and Wong \cite{FW} and Nemirovski and Shafikov \cite{NS}, the Cheng conjecture was then confirmed in \cite{HX} by Huang and Xiao, who later extended the Lu type uniformization theorem to Stein spaces, of dimension $\geq 2$, with possibly isolated normal singularities (see \cite{HX2}, and further works \cite{EXX, HL} on the Bergman-Einstein metrics on those Stein spaces). In the proofs of the above mentioned important works, the completeness of the Bergman metric played a decisive role.  On the other hand, the author and Wong recently in \cite{DW} extended Lu's theorem towards the Bergman-incomplete situation and characterized pseudoconvex domains that are biholomorphic to a ball possibly less a relatively closed pluripolar set.  Using the Suita type result and the idea of \cite{DW}, we completely drop the completeness assumption and characterize  one dimensional Stein manifolds by the curvature property of their Bergman kernel or metric.  Our next theorem is stated as follows.

\begin{theorem} \label{3rd} Let $X$ be a Riemann surface whose Bergman metric $g (z)|dw|^2$ has  Gaussian curvature identically equal to $-2$. Then $X$ is biholomorphic to a disc possibly less a relatively closed polar set, if either of the following conditions holds true:
\begin{enumerate}

\item [$1$.] there exists $p \in X$ such that $g(p) \geq 2 c^2_\beta (p)$;

\item [$2$.] there exists $p \in X$ at which the Gaussian curvature of the Bergman kernel $K(z)|dw|^2$ is less than or equal to $-4\pi$, i.e., $g(p) \geq 2 \pi K (p)$.

\end{enumerate}

  \end{theorem}

 Theorem \ref{2nd} follows from Theorem \ref{3rd}. In both theorems,  the surface is not required, a priori, to be non-compact, but later is shown to be necessarily hyperbolic;  moreover, the above constant Gaussian curvature conditions cannot be replaced by the curvature equaling a certain constant at one point as in Theorem \ref{main}, cf. \cite{Bl13, Bl15}. Due to Yau's Schwarz lemma \cite{Yau78} and the inequality part \eqref{leq} of the Suita conjecture, 
  Condition 1 in Theorem \ref{3rd} holds true whenever the Gaussian curvature of the complete Bergman metric has a  lower bound $-2$. Namely, we get 
 
 \begin{theorem} \label{lower} Let $X$ be a hyperbolic Riemann surface whose Bergman metric $g (z)|dw|^2$ is complete and has Gaussian curvature no less than $-2$. Then, $g  \geq 2 c^2_\beta $ on $X$.
 
   \end{theorem}

 Theorems \ref{3rd} and \ref{lower} immediately yield

 \begin{cor} \label{cor} For a  Riemann surface $X$, the Bergman metric is complete and has Gaussian curvature identically equal to  $-2$ if and only if $X$ is biholomorphic to a disc.

  \end{cor}

  For planar domains,  we obtain  additional results without any Bergman-completeness assumption.

 \begin{theorem} \label{4th} Let $D$ be a domain in $\mathbb C$ and let $I \subset  \partial D$ denote the set of possible irregular boundary points  with respect to the Dirichlet problem.
  If $ D \cup I$ is an open set and the Gaussian curvature of the Bergman metric on $D$ is identically $-2$, then $D$ is biholomorphic to a disc possibly less a relatively closed polar set.

  \end{theorem} 
  
 We remark that for general planar domains, the above $ D \cup I$ is not always an open set (see Example \ref{Zal}). Another  characterization of planar  domains that are biholomorphic to a disc possibly less a relatively closed polar set is stated as follows.

 \begin{theorem} \label{boundary point} 
     Let $D \subset \mathbb C$ be a domain  whose Bergman kernel is denoted by $K(z)$. Assume no boundary point $q\in \partial D$ satisfies 
     \begin{equation} \label{=>} 
     \limsup_{z\to q} K(z) = \infty  >   \liminf_{z\to q} K(z).
\end{equation} 
     Then, the Gaussian curvature of the Bergman metric on $D$ is identically $-2$ if and only if
 Conditions (i)--(iv) in Theorem \ref{main} hold  true for $D$.
     
  \end{theorem}

Theorems \ref{4th} and \ref{boundary point}  improve Theorem 1.3 in \cite{DW} for the case $n=1$ by replacing the Condition ($\star$) there.  Theorem \ref{3rd}, Corollary \ref{cor} as well as Theorems \ref{4th} and \ref{boundary point} can be viewed as one dimensional cases of Yau's problem of which the Bergman metric having constant Gaussian curvature is equivalent to being K\"{a}hler-Einstein.  
 For finite subgroups $\Gamma \subset \text{Aut} (\mathbb D)$, Huang and Li in \cite{ HL} made explicit computations on disc quotients $\mathbb D / \Gamma$, 
and showed that the Bergman metrics on their regular parts are K\"{a}hler-Einstein. We further observe that the same thing happens to their Bergman kernels, and consequently conclude by Theorem \ref{2nd} that the regular parts of $\mathbb D / \Gamma$ are necessarily biholomorphic to bounded planar domains.

\begin{cor} \label{quotient} For any nontrivial finite subgroup $\Gamma \subset \text{\normalfont Aut} (\mathbb D)$, the regular part of $ \mathbb D / \Gamma $ is biholomorphic to $\mathbb D \setminus \{0\}.$

  \end{cor}

The organization of this paper is as follows. 
 In Sec. \ref{Sec2} we use the variational approach to show the equivalence among Conditions (i), (iii) and (iv) in Theorem \ref{main}. In Sec. \ref{Sec3} we prove Key Lemma \ref{off} and use it to complete the proof of Theorem \ref{main}. In Sec. \ref{Sec4} we demonstrate our results on the Bergman metric of constant Gaussian curvature for Riemann surfaces. In Sec. \ref{Sec5} we complete the proofs of Theorems \ref{2nd}, \ref{3rd}, \ref{lower}, \ref{4th} and \ref{boundary point}.  Lastly,   in Sec. \ref{Sec6} we apply our results to disc quotients.

\section{Variational proofs of (i) $\Longleftrightarrow$ (iii), and (i) $\Longleftrightarrow$ (iv)} \label{Sec2}

The definition of the Bergman kernel is recalled as below.
Take holomorphic 1-forms $\alpha, \beta$ on a Riemann surface $X$, and define their inner product
$
\langle \alpha, \beta \rangle:= \frac{\sqrt{-1}}{2} \int_X \alpha \wedge \bar  \beta.
$
We put $\left \| \alpha \right \|_{L^2}:=\langle \alpha, \alpha\rangle^{1/2}.$ Consider the Hilbert space of holomorphic 1-forms $\alpha$ such that $\left \| \alpha \right \|_{L^2}<\infty$, and take its complete orthonormal basis $\{e_1, e_2, ... , e_j, ... \}$. Then, at $z, s \in X$, the Bergman kernel on $X$ is defined as
$$
\kappa_X (z, \overline{s}):=\sum_j e_j(z)\wedge \overline{e_j(s)}.
$$ 
The Bergman kernel is independent of the choice of the basis and is a reproducing kernel of the space of $L^2$ holomorphic top-forms. For simplicity, we write $\kappa_X (z):=\kappa_X (z, \overline{z})$.

Let $\pi: \mathcal X \to \mathbb C \ni \tau$ be a holomorphic family of Riemann surfaces such that the total space $\mathcal X$ is a complex $2$-dimensional manifold.
Let $\kappa_{X_\tau}$ be the fiberwise Bergman kernel for holomorphic (1,0) forms on the non-exceptional Riemann surface $X_\tau$, and define $$ 
K_\tau(z)|dw|^2 := \kappa_{X_\tau}(z)|_{V }, \quad
 K_\tau(z, \overline{s}) dw\otimes d\overline{t} := \kappa_{X_\tau}(z, \overline{s})|_{V\times \overline{U} },
$$ 
where $(V\times \overline{U},(w, \overline{t} ))$ 
is some local coordinate system for $X_\tau\times\overline{X_\tau}$.
The following variational result is due to Maitani and Yamaguchi \cite{MY} (see also \cite{B1, B2}).

\begin{theorem} \label{M-Y} If $\mathcal X$ is Stein, then $\log K_{\tau}(z)$ is plurisubharmonic in $(\tau, z)$. \end{theorem}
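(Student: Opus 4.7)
The plan is to reduce the theorem to a subharmonicity statement along holomorphic discs and then appeal to the positivity of the $L^2$ metric on the direct image of relative canonical forms, which is the framework underlying the modern reformulations of the Maitani--Yamaguchi variation formula.

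First, since plurisubharmonicity is local and testable on holomorphic discs, it suffices to show that for every local holomorphic section $\tau \mapsto (\tau, z(\tau))$ of $\pi : \mathcal X \to \mathbb C$, the function $\tau \mapsto \log K_\tau(z(\tau))$ is subharmonic. By the extremal property \eqref{sup},
\[
K_\tau(z(\tau)) = \sup \bigl\{\, |f(z(\tau))|^2 : f \in H^0(X_\tau, \omega_{X_\tau}),\ \|f\|_{L^2} \le 1 \,\bigr\},
\]
which is the squared operator norm of the evaluation functional $\operatorname{ev}_\tau : f \mapsto f(z(\tau))$ on the fiber $E_\tau$ of the direct image sheaf $E := \pi_*\,\omega_{\mathcal X/\mathbb C}^{L^2}$ of fiberwise $L^2$ holomorphic relative $1$-forms.

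Second, I would equip $E$ with its natural fiberwise $L^2$ Hermitian metric and invoke the theorem of Berndtsson that, when the total space $\mathcal X$ is Stein, this (possibly infinite-rank) bundle has Griffiths-positive — indeed Nakano-positive — Chern curvature. As $\tau$ varies, the family $\operatorname{ev}_\tau$ defines a holomorphic section $s$ of $E^*$, twisted by the pullback of $\omega_{\mathcal X/\mathbb C}$ along $z(\cdot)$, whose squared norm equals $K_\tau(z(\tau))$. By the standard curvature principle — Griffiths positivity of $E$ is equivalent to the dual negativity of $E^*$, which in turn is equivalent to the plurisubharmonicity of $\log \|s\|^2$ for every holomorphic section $s$ of $E^*$ — one obtains the desired subharmonicity of $\log K_\tau(z(\tau))$, hence the plurisubharmonicity of $\log K_\tau(z)$ in $(\tau, z)$.

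The main obstacle is Berndtsson's curvature positivity itself. I would attack it by choosing a local holomorphic frame for $E$ near a base point and computing the Chern curvature of the $L^2$ metric directly; the calculation produces three contributions, namely a manifestly non-negative pointwise Hessian from horizontal lifts, a non-negative boundary/Levi contribution supplied by the plurisubharmonicity of an exhaustion function of the Stein space $\mathcal X$, and a vertical $\bar\partial$-Laplacian term tamed by solving a fiberwise $\bar\partial$-equation with the Hörmander $L^2$ estimate. The infinite rank of $E$ and the possible presence of singular fibers introduce additional bookkeeping: I would work first over the regular locus using relatively compact exhaustions of each $X_\tau$, then extend across singular fibers via upper semicontinuity of plurisubharmonic functions. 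A more elementary alternative, closer to Maitani and Yamaguchi's original approach, would be to differentiate the extremal formula for $K_\tau(z)$ twice in $\tau$ by hand, extending the extremal $1$-form on $X_{\tau_0}$ to a neighborhood in $\mathcal X$ using Steinness, and then identify the second variation with a non-negative Hodge-theoretic expression.
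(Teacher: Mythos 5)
The paper does not prove Theorem \ref{M-Y}: it states it as a result of Maitani and Yamaguchi \cite{MY}, with \cite{B1, B2} offered as references for the generalization to the Stein/pseudoconvex setting. There is therefore no proof in the paper against which to compare your attempt.

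Taken on its own, your reduction is conceptually sound and represents the modern viewpoint: $K_\tau(z)$ is the squared norm of the evaluation functional on the Hilbert bundle of fiberwise $L^2$ holomorphic $1$-forms, and plurisubharmonicity of $\log K_\tau$ follows from Griffiths positivity of the $L^2$ metric on that direct image via the standard fact that $\log\|s\|^2$ is plurisubharmonic for any holomorphic section $s$ of the dual of a Griffiths-positive Hermitian bundle. Two caveats. First, what you flag as ``the main obstacle'' --- proving Berndtsson's positivity --- is essentially all of \cite{B1, B2}; your sketch (local frame, three curvature contributions, H\"{o}rmander $L^2$ estimate, exhaustion past singular fibers) understates where the difficulty actually sits (infinite rank, non-compact fibers, absence of smooth boundary), so as written your plan is a logical reduction to a citation of comparable depth, not a self-contained proof. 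Second, the ``more elementary alternative'' you mention in passing --- differentiating the extremal characterization twice in $\tau$ --- is not a side remark: it is Maitani and Yamaguchi's actual argument, and its explicit output, the second-variation identity \cite[Theorem 3.2]{MY}, is precisely what this paper invokes in the proof of Key Lemma \ref{off}. That route treats the smoothly bounded case by a boundary variational formula and passes to the general Stein case by exhaustion, which is also how the paper itself handles the non-smooth situation at the end of Section 4. If your goal is a proof that meshes with the rest of the paper's machinery, the second-variation computation is the one to pursue.
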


In particular, $\log K_{\tau}(z)$ is subharmonic in $\tau$, for any fixed $z$.

  \medskip

For an open Riemann surface $X$ admitting a Green's function $G(z, z_0)$ with a pole at $z_0\in X$, we apply Theorem \ref{M-Y} to the complex $2$-dimensional Stein manifold defined as
\begin{equation} \label{total} \mathcal X:=\{(z, \tau)\in  X\times \mathbb C \,|\,G(z, z_0)<\Real \tau\}.
\end{equation} 
In this case, each fiber $X_{\tau}$ is a Riemann surface defined in \eqref{X_tau}. If $\tau=0$, then $X_{0}$ is just the original surface $X$, whose Bergman kernel is $\kappa_{X}= \kappa_{X_0}$.

  \medskip

The Suita metric $c_{\beta}^2(z)|dw|^2 $ is a 2-form  associated to the  logarithmic capacity $c_{\beta}$ which is locally defined by \eqref{cap}.
Let $\mathcal H$ denote the Hilbert space of square integrable holomorphic 1-forms $\varphi$ on $X$ such that $\varphi|_{z_0}=c_{\beta}(z_0)dw|_{z_0} $.

\begin{proof} [\bf Proof of $(i) \Longrightarrow (iii)$] The Bergman kernel $\kappa_X $ satisfies the extremal property, namely at any $z_0 \in X$, it holds that
\begin{equation} \label{sup}
\kappa_X(z_0)=\sup \left \{ \left. h(z_0) \wedge \overline {h (z_0)} \,\, \right | \,\,h \text{ is a holomorphic 1-form and}\left \| h \right \|_{L^2}=1  \right\}.
\end{equation}
Suppose $\pi K(z_0)=c^2_{\beta}(z_0)$.  Then
 the form
\begin{equation} \label{F}
F:=\frac{  {\sqrt{\pi}K (s , \overline{z_0})}} {\sqrt{K(z_0)}} dt,
\end{equation}
 belongs to $\mathcal H$, with $L^2$-norm being $\sqrt{\pi}$. If there exists another form $\varphi \in \mathcal H$ with $||\varphi||_{L^2} < \sqrt {\pi}$, then due to the extremal property \eqref{sup}, $\pi K(z_0) > c^2_{\beta}(z_0)$, which is a contradiction.
 So the form $F$ locally given by \eqref{F} attains the minimal $L^2$-norm among all forms in $\mathcal H$. By general principles from Hilbert space theory, we know that the element of minimal norm is unique, so there exists a unique form $F \in \mathcal H$ with $||F||_{L^2}\leq \sqrt \pi$. Moreover, such unique $F$ given by \eqref{F} satisfies $||F||_{L^2}\ =\sqrt \pi$. 

\end{proof}

\begin{proof} [\bf Proof of $(iii) \Longrightarrow (i)$] Consider the form  
$$ 
h:=\frac{K(s , \overline{z_0}) c_{\beta}(z_0)}{K(z_0)} dt.
$$ 
We know that $h\in \mathcal H$ with $L^2$-norm being ${c_{\beta}(z_0)}/\sqrt{K(z_0)}$, which is less than $\sqrt{\pi}$ by the inequality part \eqref{leq} of Suita's conjecture. Condition (iii) says that for the unique form $h$ in $\mathcal H$ with $L^2$-norm not greater than $\sqrt \pi$, it should satisfy $||h||_{L^2}\ =\sqrt \pi$, which yields that $\pi K(z_0)=c^2_{\beta}(z_0)$. 

\end{proof}

Next, we prove that in Theorem \ref{main}, Conditions $(i)$ and $(iv)$ are equivalent.

\begin{proof} [\bf Proof of $(iv) \Longrightarrow (i)$] Suppose $\log K_{\tau}(z_0)$ is harmonic in $\tau$. For each fixed $\tau\in \mathbb C$, since the definition of ${X_\tau}$ in \eqref{X_tau} depends only on $\Real \tau$, so does its Bergman kernel $ \kappa_{X_\tau}(z_0)$. Therefore, $\log K_{\tau}(z_0)$ is linear in $\Real \tau$, and we write 
$$
\log K_{\tau}(z_0) = C \Real \tau + \log K(z_0),
$$
since $K_{0}(z_0)$ is just $K (z_0)$. By \eqref{X_tau} again,  the Green's function on ${X_\tau}$ with a pole at $z_0$ is $G(z, z_0)-\Real \tau$. It then follows that the logarithmic capacity $c_{\beta, \tau}$ on ${X_\tau}$ defined by \eqref{cap} satisfies 
\begin{equation} \label{c_tau}
c_{\beta, \tau}(z_0)= c_{\beta }(z_0)\cdot e^{-\Real \tau}.
\end{equation} 
As $\Real \tau \to -\infty$, ${X_\tau}$ becomes a simply connected surface, on which the equality condition in Suita's conjecture holds, namely
\begin{equation} \label{=_tau}
\pi K_\tau(z_0)= c^2_{\beta, \tau}(z_0).
\end{equation} 
Therefore, for all $\tau$ such that $\Real \tau \ll 0$, 
$$
\frac{\pi K(z_0)}  {c^2_{\beta}(z_0)}=e^{(C-2)\Real \tau},
$$
which forces $C=2$ since the above left hand side is independent of $\tau$. So we get (i).

\end{proof}

To prove the converse direction, we need an elementary but crucial {\bf fact}: {\it a convex function of one real variable defined on the negative half axis must be monotonically non-decreasing if it is bounded from above at $-\infty$}. This fact can be proved using the Taylor  expansion.

\begin{proof} [\bf Proof of $(i) \Longrightarrow (iv)$] For the family of surfaces $X_{\tau}$ defined by  \eqref{X_tau}, by Theorem \ref{M-Y}, $\log K_{\tau}(z_0)$ is subharmonic in $\tau$.  
 In this case, $\log K_{\tau}(z_0)$ is convex in $\Real \tau$, since $ K_{\tau}(z_0)$ depends only on $\Real \tau$.
  Then, $\log K_{\tau}(z_0)+ 2\Real \tau$, which is also convex, is asymptotic to $2\log c_\beta(z_0)-\log \pi$ as $\Real \tau\to -\infty$, when ${X_\tau}$ becomes simply connected (see \cite{BL}). Therefore, $\log K_{\tau}(z_0)+ 2\Real \tau$ is bounded from above at $-\infty$, and by the above elementary fact, it is non-decreasing. In particular, for any $\Real \tau\leq 0$, $$\log K_{\tau}(z_0)+ 2\Real \tau\leq \log K(z_0).$$ 
  If $\pi K(z_0)=c^2_{\beta}(z_0)$, then
  \begin{equation} \label{K_tau}
\log K_{ \tau}(z_0)+ 2\Real \tau = \log K(z_0),
\end{equation}
so $\log K_{\tau}(z_0)$ is harmonic in $ \tau$, and in fact linear in $\Real \tau$. 

\end{proof}

 See Corollary \ref{v} for the equivalent (but weaker) versions of Conditions (iii) and (iv) in Theorem \ref{main}.

 \section{Proofs of Key Lemma \ref {off} and Theorem \ref{main}}  \label{Sec3}

 Let $X$ be an open Riemann surface admitting a Green's function $G(z, z_0)$ with a pole $z_0\in X$. For given $\tau\in \mathbb C$, let $X_\tau$ be the sub-surface defined by \eqref{X_tau}, and let $\kappa_{X_\tau}$ be the Bergman kernel for holomorphic (1,0) forms on $X_\tau$.

 \begin{klem} \label{off} On an open Riemann surface $X$ admitting a Green's function, if $\pi K(z_0)=c^2_{\beta}(z_0)$, then
$$
 \left. \kappa_{X}(s, \overline{z_0}) \right|_{X_\tau} = \kappa_{X_\tau}(s, \overline{z_0})  \cdot e^{\tau+\bar{\tau}},
 $$
 for all $\tau$ such that $\Real \tau \leq 0$ and the Bergman kernel $\kappa_{X_\tau}(s, \overline{z_0})$ has no zero in $X_\tau$.
 \end{klem}

\begin{proof}  

 Step {\bf 1}) In a local coordinate system $(U\times \overline{V},(t, \overline{w} ))$ for $X_\tau\times\overline{X_\tau}$ such that $w(z_0)=0$,
we define
$$
 K_\tau(s, \overline{z}) dt\otimes d\overline{w} := \kappa_{X_\tau}(s, \overline{z})|_{U\times \overline{V} }, \quad 
K_\tau(z)|dw|^2 := \kappa_{X_\tau}(z)|_{V }.
$$
 In particular, $\kappa_{X}= \kappa_{X_0}$.

  We {\bf claim that}:  on each $X_{\tau}$,
$$ 
F_{\tau}:=\frac{e^{\tau+\bar \tau} {\sqrt{\pi}K_{\tau}(s, \overline{z_0})}} {\sqrt{K(z_0)}} dt
$$ 
is the unique holomorphic 1-form that satisfies $F_{\tau}|_{z_0}=   c_{\beta}(z_0) dw|_{z_0} $and
$$||F_{\tau}||_{L^2(X_{\tau})}\leq \sqrt \pi \cdot e^{\Real \tau}.$$

In fact, by \eqref{K_tau}, the form
$$
T_{\tau}:=\frac{{\sqrt{\pi}K_{\tau}(s, \overline{z_0})}e^{\Real \tau}}{{\sqrt{K(z_0)}}} dt=\frac{{\sqrt{\pi}K_{\tau}(s, \overline{z_0})} }{ {\sqrt{K_{{\tau}}(z_0) }}} dt
$$
 has $L^2$ norm on $X_{\tau}$ equaling $\sqrt{\pi}$. Moreover, by \eqref{=_tau} and \eqref{c_tau} from the proof of the equivalence between (i) and (iv), 
 $T_{\tau}|_{z_0}=c_{ \beta, \tau}(z_0)dt |_{z_0} =c_{ \beta}(z_0) dw  |_{z_0} \cdot e^{-{\Real \tau}}$.
So, we define $F_{\tau}:=T_{\tau}\cdot e^{\Real \tau}$, and by the equivalence between (i) and (iii), the uniqueness in the claim follows.

\medskip

 Step {\bf 2}) 
We may first assume that the boundary $\partial X_{\tau}$ consists of a finite number of $C^2$ smooth closed curves and $\partial X_{\tau}$ varies $C^2$ smoothly with $\tau$ in $\mathbb C$. Then, by \cite[Theorem 3.2]{MY},
\begin{align*}
\frac{\partial^2 K_{\tau}(z_0)}{\partial \tau \partial\bar\tau}= &\int \limits_{\partial X_{\tau}} {k_2(\tau, s)} \left( |\mathcal L_{\tau}(s, \overline{z_0})|^2+| K_{\tau}(s, \overline{z_0} ) |^2\right)d\sigma_s\\
&+\iint \limits_{X_{\tau}} \Big(\left|\frac{\partial \mathcal L_{\tau}(s, \overline{z_0})}{\partial \bar \tau}\right|^2+\left| \frac{\partial K_{\tau}(s, \overline{z_0})}{\partial \bar \tau}\right|^2\Big)dv_s,
\end{align*}
where $dv_s={|ds|^2}/{2}$ is the Lebesgue $\mathbb R^2$ measure. In fact, the definition of $\mathcal X$ in \eqref{total} implies that $k_2(\tau, s)=0$ on $\partial \mathcal X:=\coprod _{\tau\in \mathbb C} {\partial X_{\tau}}.$ Therefore, by \eqref{K_tau},
\begin{equation} \label{less} 
\iint _{X_{\tau}} \left| \frac{\partial K_{\tau}(s, \overline{z_0})}{\partial \bar \tau}\right|^2 dv_s \leq \frac{\partial^2 K_{\tau}(z_0)}{\partial \tau \partial\bar\tau}  =K(z_0)\cdot e^{-\tau-\bar\tau}.
\end{equation} 
Consider on $X_{\tau}$ the square integrable 1-form $$B_{\tau}:=\frac{-\partial K_{\tau}(s, \overline{z_0})}{\partial \bar \tau}  \frac{\sqrt{\pi} e^{\tau+\bar\tau}}{\sqrt{K(z_0)}} dt.$$ It is holomorphic in $s$, and by \eqref{less} satisfies
$$
||B_{\tau}||^2_{L^2(X_{\tau})}\leq  \pi  e^{ \tau+\bar\tau }.
$$
Moreover, 
$$
B_{\tau}|_{z_0}=\frac{-\partial K_{\tau}(z_0)}{\partial \bar \tau}  \frac{\sqrt{\pi} e^{\tau+\bar\tau}}{\sqrt{K(z_0)}} dt|_{z_0}=\sqrt{\pi K(z_0)}dw|_{z_0}=c_\beta(z_0)dw|_{z_0}.
$$
By the uniqueness in Step 1), we know that $B_{\tau}\equiv F_{\tau}$ on  $X_{\tau}$, i.e., 
\begin{equation} \label{PDE} 
\frac{\partial K_{\tau}(s, \overline{z_0})}{\partial \bar \tau}\equiv-K_{\tau}(s, \overline{z_0}).
\end{equation} 
Since by assumption $K_{\tau}(s, \overline{z_0})\neq 0$ for all $s \in X_{\tau} \subset X$, 
\begin{align*}
 \frac{\partial^2 \log |K_{\tau}(s, \overline{z_0})|^2}{\partial \tau \partial\bar\tau} =&\frac{\partial}{\partial \tau} \left \{ | K_{\tau}(s, \overline{z_0})|^{-2}   \left(  -K_{\tau}(s, \overline{z_0})   \overline {K_{\tau}(s, \overline{z_0})}+ K_{\tau}(s, \overline{z_0}) \frac{\partial \overline {K_{\tau}(s, \overline{z_0})}}{ \partial\bar\tau}  \right)\right \}\\
=& - { {\overline{ K_{\tau}(s, \overline{z_0})}^{-2}}}   \frac{\partial \overline{ K_{\tau}(s, \overline{z_0})}}{\partial \tau}  \frac{\partial \overline{ K_{\tau}(s, \overline{z_0})}}{\partial \bar{\tau}} +  {\overline{ K_{\tau}(s, \overline{z_0})}}^{-1}    \frac{\partial}{ \partial\bar {\tau}}\frac{\partial  \overline {K_{\tau}(s, \overline{z_0})}}{ \partial \tau} \\
=&-{ {\overline{ K_{\tau}(s, \overline{z_0})}^{-2}}}  (- \overline{ K_{\tau}(s, \overline{z_0})})  \frac{\partial \overline{ K_{\tau}(s, \overline{z_0})}}{\partial \bar{\tau}} +  {\overline{ K_{\tau}(s, \overline{z_0})}}^{-1}   \frac{\partial (- \overline {K_{\tau}(s, \overline{z_0})})}{ \partial\bar {\tau}}=0.
\end{align*} 
This implies that $\log |K_{\tau}(s, \overline{z_0})|$ is harmonic in $\tau$ and moreover linear in $\Real \tau$. Since $K_{0}(s, \overline{z_0})$ is just $K(s, \overline{z_0})$, we write 
$$
\log |K_{\tau}(s, \overline{z_0})|=C(s, \overline{z_0})\cdot \Real \tau+\log |K(s, \overline{z_0})|,
$$
 where $C(s, \overline{z_0})$ is a real-valued function of $s$ and $z_0$. 
Therefore,
$$
\left|\frac{K(s, \overline{z_0})}{K_{\tau}(s, \overline{z_0})}\right|=e^{ -\Real \tau \cdot C(s, \overline{z_0})},
$$ 
and thus there is a $\theta \in \mathbb R$ such that $$\left. K(s, \overline{z_0}) \right|_{X_\tau} =K_{\tau}(s, \overline{z_0}) \cdot e^{ -\Real \tau \cdot C(s, \overline{z_0})}\cdot e^{i \theta}.$$ By letting $\tau$ be $0$, we know $\theta =0$. By \eqref{PDE}, $C(s, \overline{z_0})\equiv -2$. Finally, for a general surface $X$, we may approximate it by an increasing sequence of smoothly bordered  ones whose union is $X$, since the Bergman kernel, Green's function and logarithmic capacity converge locally uniformly. 
Without loss of generality, it suffices to assume that $X$ has smooth boundary.

\end{proof}

 Let $X$ be an open Riemann surface admitting a Green's function $G(z, z_0)$ with a pole at $z_0\in X$. Let $w$ be a local coordinate on a neighborhood $V $ of $z_0 $ satisfying $w(z_0) = 0$.
 Since $G(z, z_0)-\log|w(z)|$ is harmonic, there exists a holomorphic function $f_1$ on $V $ such that $\Real f_1(z) = G(z, z_0)|_{V } -\log|w(z)|$. Define the local (injective) holomorphic function
\begin{equation} \label{f} 
f(z):=w(z) e^{f_1(z)- i \Imagine{f_1(z_0)}}.
\end{equation} 
 Then, $f (z_0)=0$, $f^{\prime}(z_0)>0$ and  $G(z, z_0)|_{V }= \log|f(z)| $.

 \medskip
 
The following Theorem \ref{lemma}, which corresponds to Guan and Zhou's Proposition 4.21/Corollary 5.4 in \cite{GZ3}, played a decisive role in their proof of the equality part in Suita's conjecture.

\begin{theorem}  [Proposition 4.21/Corollary 5.4 in \cite{GZ3}] \label{lemma} On an open Riemann surface $X$ admitting a Green's function, if $\pi K(z_0)=c^2_{\beta}(z_0)$, then  there exists a holomorphic 1-form $F$ on $X$ such that $F|_{V} = df$,
where $f$ is defined by \eqref{f}.
\end{theorem}

However, Guan and Zhou's proof of Theorem \ref{lemma} is highly non-trivial and heavily relied on the Main Theorems 1 and 2 developed in \cite{GZ3}. An intricate selection of a special function $C_A(t)$ became the main difficulty and one of the key points in \cite{GZ2, GZ3}.  Their idea was to extend $df$ by using the $L^2$ extension theorem of a very general version. We will give a purely variational proof of Theorem \ref{lemma} without using the $L^2$ extension theorem, as the global form $F$ defined by \eqref{F} already becomes a natural candidate.

\begin{proof} [\bf Proof of Theorem \ref{lemma} (without $L^2$ extension theorem)]

Take $\Real \tau  \ll 0$ such that $X_{\tau}$ defined by \eqref{X_tau} is simply connected. 
Then, $f$ defined by \eqref{f} is a biholomorphism from $X_{\tau}$ to $\mathbb D_r$, where $\mathbb D_r$ is a disk centred at $0\in \mathbb C$ with radius $r:=\exp (\Real \tau)$.
By the transformation rule of the Bergman kernel, we know that 
$$
K_{\tau}(s, \overline{z_0})=f^{\prime} (s) \cdot \overline{f ^{\prime}(z_0) }\cdot K_{\mathbb D_r}\left(f (s), \bar 0\right) = \frac{f ^{\prime}(t) \cdot f^{\prime} (z_0)} { \pi r^2}.
$$
 Letting $s=z_0$, we get $f ^{\prime}(z_0)=r\sqrt{\pi K_{\tau}(z_0)}$, so for all $s \in X_{\tau}$,
 \begin{equation} \label{df}
df  =  \frac{r \sqrt {\pi} K_{\tau}(s, \overline{z_0})}{\sqrt{K_{\tau}(z_0)}}  dt.
\end{equation}  
By Key Lemma \ref{off}, since $K_{\tau}(s, \overline{z_0})$ is zero-free, the form $F$ defined by \eqref{F} satisfies 
\begin{equation*} 
F |_{X_{\tau}}= \left. \frac{ {\sqrt{\pi}K (s, \overline{z_0}) }} {\sqrt{K(z_0)}} dt  \right| _{X_{\tau}}=\frac{ {\sqrt{\pi}K_{\tau} (s, \overline{z_0}) }\cdot e^{ 2\Real \tau}} {\sqrt{K_{\tau}(z_0)} \cdot e^{ \Real \tau}} dt=df,
\end{equation*}  
which completes the proof of  Theorem \ref{lemma}.

\end {proof}

The existence of the holomorphic 1-form $F$ in Theorem \ref{lemma} guarantees the existence of a global holomorphic function whose absolute value is equal to $\exp G(z, z_0)$, as demonstrated in \cite [pages 1195--1196] {GZ3} by Guan and Zhou, whose construction was to lift up local properties to the universal covering space and then descend them back. We will give a straightforward argument of such an implication by simply applying the uniqueness property of real-analytic functions, and consequently complete the proof of Theorem \ref{main}.

\begin{proof} [\bf Proof of Theorem \ref{main} (without $L^2$ extension theorem)] If $\pi K(z_0) = c^2_{\beta}(z_0)$, then by Theorem \ref{lemma} there exists a holomorphic 1-form $F$ on $X$  such that $F|_{V} = df$,
where $f$ is a holomorphic function on a neighborhood $V $  of $z_0 $ such that  $ |f(z)| =\exp G(z, z_0)|_{V } $.
So, for $ z \in V$,
\begin{equation} \label {same} 
 \left. \frac{F}{2 {\partial G(z, z_0)} } \right|_{V } =   \frac{df}{ {\partial \log (f(z) \overline{f(z)})}  } = \frac{ \partial f}{\frac{ \partial  f(z)  }{ f(z)}} = f(z). 
\end{equation} 
Both $\left | \frac{F}{2  \partial G(z, z_0)}  \right| $ and $\exp G(z, z_0)$ are defined on $X $. So by \eqref{same} and the uniqueness  of real-analytic functions, 
it holds on $X$ that 
$$
\exp G(z, z_0)=   \left | \frac{F}{2  {\partial G(z, z_0)} } \right|.
$$
As $G(\cdot , z_0)$ is harmonic, $   (2  {\partial G(z, z_0)} )^{-1} F $ is a holomorphic function on $X$. Then, by \cite[Lemma A.2] {DTZ} (see also \cite{Min}), the surface is necessarily biholomorphic to a disc possibly less a relatively closed polar set. Thus, we have proved the implication \eqref{circle}, and particularly (i) implying (ii). The converse direction is trivial as the polar part is negligible, so the proof is complete.

\end{proof}

From the above proof, we see that the converse of Theorem \ref{lemma} holds true as well. Moreover, we find that Conditions (iii) and (iv) in Theorem \ref{main} are in fact equivalent to their weaker versions.

\begin{cor}   \label{v} Under the same assumptions as in Theorem \ref{main}, either of the Conditions (i)--(iv) is equivalent to either  of the followings:
  \begin{enumerate}
  
  \item [$(iii^{\prime})$] there exists a unique holomorphic 1-form $F $ on $X$ with $F |_{z_0}=c_\beta(z_0)dw|_{z_0} $ and $||F||_{L^2}\leq \sqrt \pi$;
  
      \item [$(iv^{\prime})$]  $ \log K_{\tau}(z_0)  + 2\Real \tau = \log K (z_0) $ for some $\tau$ such that  $\Real \tau<0$;
      
\item [$(v)$] there exists a holomorphic 1-form $F$ on $X$ such that $F|_{V} = df$,
where $f$ is defined by \eqref{f}.
  \end{enumerate}

\end{cor} 

 \begin{proof}  
{\bf Part $(v)$}. Theorem \ref{lemma}  shows that (i) $\Longrightarrow$ (v).
 The construction in the above proof of Theorem \ref{main} shows that (v) $\Longrightarrow$ (ii).

\medskip

 {\bf Part $(iii^{\prime})$}. Guan and Zhou in \cite [Proposition 4.21/Corollary 5.4] {GZ3} showed that (iii$^{\prime}$)   implies (v), which is further equivalent to (iii), by Part (v) of our corollary. Conversely,  (iii)   implies (iii$^{\prime}$), as the unique form in $ \mathcal H$ in general may have its $L^2$ norm strictly less than $\sqrt \pi$.
 
 \medskip

{\bf Part $(iv^{\prime})$}.  The harmonicity of $ \log K_{\tau}(z_0)$ in $\tau$ implies $ \log K_{\tau}(z_0)  + 2\Real \tau = \log K (z_0) $ for all $\tau$ such that  $\Real \tau<0$. Conversely, (iv$^{\prime}$) $\Longrightarrow$ (iv) due to the convexity and monotonicity of $ \log K_{\tau}(z_0)  + 2\Real \tau $ in $\Real  \tau $.
    
\end{proof}

\section{Results on the Bergman metric of constant curvature} \label{Sec4}
Let $X$ be a Riemann surface, 
and let $\kappa_X$ be the {\it Bergman kernel} for holomorphic (1,0) forms on $X$.
In a local coordinate system $(V\times \overline{U},(w, \overline{t} ))$ for $X\times\overline{X}$,
we define
$$
 K(z, \overline{s}) dw\otimes d\overline{t} := \kappa_X(z, \overline{s})|_{V\times \overline{U} },
$$ 
and
\begin{equation} \label{KLocal}
K(z)|dw|^2 := \kappa_X(z)|_{V }, \quad 
K(s)|dt|^2 := \kappa_X(s)|_{U }.
\end{equation}
The Hermitian form
\begin{equation} \label{metric}
 g (z)dw\otimes d\overline{w}:=\frac{\partial^2\log K (z)}{\partial w\partial\overline{w}}dw\otimes d\overline{w}.
\end{equation} 
is called the {\it Bergman metric} on $X$ if it is positive everywhere. 

\medskip 

Note that \eqref{metric} is independent of the choice of the local coordinate system.  If $( V^{1} \times \overline{U^{1}},(w^{1}, \overline{t^{1}}))$ is another coordinate system, then in $(V\times\overline{U})\cap(V^{1}\times\overline{U^{1}})$,
\begin{equation}\label{trk}
K (z,\overline{s})=K^{1} (z,\overline{s})  \frac{\partial w^{1}}{\partial w} \overline{ \frac{\partial t^{1}}{\partial t} }, \quad g (z)=  {g}^{1}( {z }) \left | \frac{\partial w^{1}}{\partial w} \right|^2,
\end{equation}
where $g^{1}(z )= ( \log K^1 (z) )_{w^{1} \overline{w^{1}}}$. By the transformation formula of the Bergman kernel (under biholomorphisms), every biholomorphism is an isometry with respect to the Bergman metric. 

\medskip 

Given a point $ {p}\in {X}$, define the analytic variety
\begin{equation} \label{zero set}
Z_p:=\{z\in X: \kappa_X(z, \overline{p})=0\}.
\end{equation}
 The transformation formulae (\ref{trk})  shows that this set is well-defined. Therefore, as Riemann surfaces $X \setminus Z_{p}$ and $X$ have the same Bergman kernel and Bergman metric.

\medskip 

For any fixed $s\in X$, consider 
 \begin{equation} \label{dia}
z \longmapsto  \log \frac{K (z)   K (s)}{ |K (z,\overline{s})|^2},
\end{equation}
which is a well-defined real-valued real-analytic function on $X \setminus Z_{s}$. The right hand side of \eqref{dia}, denoted by $\Phi_s(z)$, is a K\"{a}hler potential
for the Bergman metric $g= (\Phi_{s})_{w\bar w}$ under the local coordinate system $(V , w )$.
Locally, $\Phi_s(z)$ coincides with Calabi's diastasis, so we call $\Phi_{s}(z)$ the Bergman-Calabi diastasis relative to $s$.

\begin{definition}

Let $X$ be a Riemann surface which possesses the Bergman metric. For any $p \in X$, 
let $t$ be a local coordinate such that $t(p) = 0$. The {\it Bergman representative coordinate} $T$  relative to $p$ is defined by 
\begin{equation} \label {rep}
z \longmapsto T(z):={g}^{-1}(p)\Big\{ K^{-1}(z,\overline{p}) \frac{\partial}{\partial \overline{t}}\Big|_{s=p}  K (z,\overline{s})-\frac{\partial}{\partial \overline{t}}\Big|_{s=p}\log K (s)\Big\}.
\end{equation}
\end{definition}

The above construction is independent of the choice of a coordinate system $(V, w)$, but it depends on the choice of local coordinate system $(U, t)$. 
In fact,  if we change the local coordinate system $(U, t)$ to another coordinate system $(   U^1, t^1)$ around  $p$, then 
\begin{equation} \label{trT}
T(z) =   {T^1(z)}   \frac{\partial t}{\partial t^{1}}   (p) , \quad z\in X \setminus Z_p,
\end {equation}
where $Z_p$ is well-defined by \eqref{zero set}. 
Since all holomorphic vector bundles over $X$ (which we can  assume to be non-compact after a possible removal of a single point) are holomorphically trivial, one may fix some linear trivialization so that $T$ becomes a holomorphic function on $X \setminus Z_p$. Also,  $ T^{\prime} (p)= 1$.

\medskip

   Following the argument of \cite{DW}, we   get

     \begin{pro} \label{zero} 
     Let $X$ be a Riemann surface on which the Gaussian curvature of the Bergman metric  is identically $-2$. For any $p \in X$, let $ T $ be the Bergman representative coordinate defined by \eqref{rep}.
     Then,
      the followings  hold true:
 \begin{enumerate}

\item [1)] $T$ is a holomorphic isometry with respect to the Bergman metric from $X$ to a disc $\mathbb D_r :=\{ \eta \in \mathbb C :    |\eta |^2   < {2 g^{-1} (p)}  \}$;

\item [2)] $X$ is hyperbolic, namely, an open surface admiting a negative non-constant subharmonic function;

\item [3)] the Bergman kernel $\kappa_X(z, \overline{p})$ has no zero set, i.e., the surface $X$ is Lu Qi-Keng;

\item [4)]  the Bergman-Calabi diastasis relative to $p$  can be written as

\begin{equation}  \label{Lu_g}
\Phi_{p}( z) =   -2  \log      \left (1 - \frac{1}{2}   |T(z)|^2   g (p)   \right ), \quad z \in  X.
\end{equation}

 \end{enumerate}

\end{pro}

Notice that by \eqref{trk} and  \eqref{trT},  $|T(z)|^2 g (p)$ is independent of the choice of the local coordinate system around  $p$. Thus the right hand side of \eqref{Lu_g} is indeed a function on $X$.

\begin{proof}  [\bf Proof of Proposition \ref {zero} ]

{\bf 1)}
Choose some local coordinate system $( U , t )$ for $X $ such that $t(p) = 0$. For simplicity, by \eqref{KLocal} and \eqref{metric}, let $K:=K(s)$ and $g:=g(s)$.
The Gaussian curvature of the Bergman metric on $X$ is defined as
 $$
 R_{X} :=  2 g ^{-2}   R,
 $$
 where the curvature tensor is
 given by
$$
R =-\frac{\partial^4}{\partial t^2  \partial  \overline{t} ^2} \log K  +    g^{ -1}  \frac{\partial^3 }{\partial t^2   \partial \overline {t }} \log K  \frac{\partial^3 }{\partial\overline {t}^2  \partial   {t}  }  \log K.
 $$
If the Gaussian curvature is identically $-2$, then
$$
R =- g^2,
$$
which implies that
\begin{equation} \label{const}
\frac{\partial^4}{\partial t^2   \partial \overline {t} ^2 } \log K=  g^{ -1}  \frac{\partial^3 }{\partial  t ^2 \partial \overline {t }} \log K  \frac{\partial^3 }{\partial  \overline {t}^2 \partial  {t}  }  \log K +   g.
\end{equation}

Consider the test function 
 $$
\phi(s):= -2 \log  \left (1 - \frac{1}{2} |T(s)|^2 g(p)  \right ) .
 $$
Then, $\phi$ induces a K\"{a}hler  metric whose Gaussian curvature is also identically $-2$, with a similar identity as \eqref{const}.
Direct computations (see \cite{Lu, DW}) yield that
$$
\left. \frac{\partial^2 \log K }{\partial t  \partial \overline {t  }} \right|_{s=p}=\left. \frac{\partial^2 \phi(s)}{\partial t  \partial \overline {t  }} \right|_{s=p}=g  (p),
$$
$$
\left. \frac{\partial^3 \log K }{\partial^2 t     \partial \overline {t } } \right|_{s=p}=\left. \frac{\partial^2 \phi(s)}{\partial ^2 t  \partial \overline {t } } \right|_{s=p}=0.
$$
 The partial derivatives of order 4 can be computed directly from \eqref{const}; furthermore, the partial derivatives of higher order can be successively computed by taking all possible successive derivatives of \eqref{const}. As a result, they all vanish at $s=p$.
By the uniqueness of the Taylor expansion, it holds that
\begin{equation} \label{K(z, z)}
K (s)=  \left (1 - \frac{1}{2}  |T(s)|^2 g(p) \right )^{-2} e^{F(T(s))+\overline{F(T(s))}}, \quad s\in U,
\end{equation}
for some holomorphic function $F$.

Let  $X^{\prime}:= \{z \in X \setminus Z_p : T(z) \in \mathbb D_r \}$ be the set of points in $X \setminus Z_p$ that are mapped into the disc $\mathbb D_r$.  In particular, $U  \subset X^{\prime}$.

By \eqref{K(z, z)} and the theory of power series, one may duplicate the variable with its conjugate so that  the full Bergman kernel can be complex analytically continued as
\begin{equation}  \label{K(z, z_0)}
K (s, \overline{s_0})= \left (1 - \frac{1}{2}    T(s)   g(p)  \overline {T(s_0)}   \right )^{ -2}   e^{f(T(s))+\overline{f(T({s_0}))}}, \quad (s, s_0) \in U \times \overline{U}.
\end{equation}
Then for any $s_0\in U$, by definition,
 \begin{align*}
\Phi_{s_0}( s)&= \log \frac{\left (1 - \frac{1}{2} |T(s)|^2 g(p) \right )^{ {-2} } e^{f(T(s))+\overline{f(T(s))}}   \left (1 - \frac{1}{2} |T(s_0)|^2 g(p) \right )^{-2} e^{f(T(s_0))+\overline{f(T(s_0))}}  }{  \left |1 - \frac{1}{2}     T(s)  g(p)   \overline {T(s_0)}   \right |^{ -4 } |e^{f(T(s))+\overline{f(T({s_0}))}}  |^2}\\
& =  {-2}   \log  \left[    \left (1 - \frac{1}{2} |T(s)|^2 g(p) \right )  \left (1 - \frac{1}{2} |T(s_0)|^2 g(p) \right )  \left  | 1 - \frac{1}{2}     T(s)   g(p)  \overline {T(s_0)}   \right  |^{ -2}  \right  ], \quad s\in U ,
 \end{align*}
which yields that
$$
\Phi_{p}(s_0)= \Phi_{s_0}(p) = {-2} \log  \left (1 - \frac{1}{2} |T(s_0)|^2 g(p) \right ).
$$
 On the other hand, the Bergman-Calabi diastasis $\Phi_{p}(s)$ relative to $p$ is defined on $X \setminus Z_p$ and thus on $X^{\prime}$, where ${-2} \log  \left (1 - \frac{1}{2} |T(s )|^2 g(p) \right )$ can be defined. Since these two real-analytic  functions coincide on  $U$, they are identical to each other on $X^{\prime}$. That is, 
\begin{equation} \label{on X prime}
\Phi_{p}(s)=  {-2} \log  \left (1 - \frac{1}{2} |T(s )|^2 g(p) \right ), \quad s \in X^{\prime}.
\end{equation}

 We {\bf claim that}:  no point in  $\Omega \setminus Z_p$ is mapped outside the disc $\mathbb D_r$ by $T$. 

If not, suppose there exists some point $q \in  X \setminus Z_p$ that is mapped to $\{ \eta \in \mathbb C:   |\eta|^2 \geq  {2 g^{-1} (p)}   \}$. Choose some point $q_0 \in X^{\prime}$.  Since $X \setminus A_p$ is path-connected, one can choose a path $\gamma$ that connects $q_0$ and $q$. Suppose under $T$ the image of $\gamma$   intersects $\partial  \mathbb D_r$ firstly at some point $T(q_1)$.

 Along the path  $\gamma$ take a sequence of points $(q_l)_{l \in \mathbb N} \subset X^{\prime}$ such that $q_l  \to q_1$.
 Then by \eqref{on X prime},
$$
\Phi_{p}(q_l )=  {-2} \log  \left (1 - \frac{1}{2} |T(q_l)|^2 g(p) \right ).
$$
Here, as  $q_l  \to q_1$,  the left hand side is finite but  the right hand side  blows up to infinity.  This  is  a contradiction, so we have thus proved our claim, which says that $X^{\prime}=  X \setminus Z_p $.

Therefore, by  our claim we know that \eqref{on X prime} in fact holds on $X \setminus Z_p $. Since $T$ maps $X \setminus Z_p$ holomorphically  to the disc $\mathbb D_r$ and satisfies 
\begin{equation} \label{bound}
 |T |^2<  {2 g^{-1} (p)}  ,
\end{equation}
by the Riemann removable singularity theorem, $T$ extends across the analytic variety $Z_p$ to the whole surface $X$ with $|T |^2 \leq {2 g^{-1} (p)} .$
  The maximum modulus principle then yields  that \eqref{bound} in fact holds  on $X$, namely, 
$T$ mapping $X$ holomorphically to the disc  $\mathbb D_r$.
 The isometry part will be proved in Part 4).

\medskip

{\bf 2)} By 1), $T$ is a non-constant bounded holomorphic function on $X$, and satisfies \eqref{bound}. Then, $2 \log |T| +  \log {\frac{g(p)}{2}} $ becomes a negative non-constant subharmonic function on $X$, which is necessarily hyperbolic.

\medskip

{\bf 3)}  Suppose
$Z_{p} \neq \emptyset$, i.e., $X$ is not Lu Qi-Keng.
Then take some point $q \in Z_{p}$ and take a sequence of points $(s_j)_{j\in \mathbb N} \subset X  \setminus Z_p$ such that $s_j \to q$.
By \eqref{on X prime},
$$
\Phi_{p}( s_j) =  {-2}   \log      \left (1 - \frac{1}{2} g(p) |T(s_j)|^2 \right ).
$$
Letting $z_j  \to q$, we see that the above left hand side blows up to infinity, but  the right hand side is finite. This  is a contradiction, so $Z_{p} = \emptyset$.

\medskip

{\bf 4)}
By 2) and \eqref{on X prime}, we know that the formula \eqref{Lu_g} holds true on $X$. So  the explicit formula of the Bergman metric can be further computed as
 \begin{equation} \label{V}
g(s) |dt |^2=  | T^{\prime}(s)|^2 g (p) \left (1 - \frac{1}{2}    |T(s)|^2   g (p)   \right )^{-2}  |dt |^2, 
 \end{equation}
which shows that $T$ is an isometry.

   \end {proof}

      \section{Proofs of Theorems \ref{2nd}, \ref{3rd}, \ref{lower}, \ref{4th} and \ref{boundary point}}  \label{Sec5}

  \begin{proof} [\bf  Proof of Theorem \ref{3rd}] 

By definition, the Gaussian curvature of the Bergman kernel $K(z)|dw|^2$ 
is 
\begin{equation} \label{curvK}
-4   \frac{{( \log \sqrt{K (z)})}_{ w \overline{w}} }{K(z)} = -2 \frac{g(z)}{K(z)}.
\end{equation} 
Thus, Condition 2 in Theorem \ref{3rd} is equivalent to the existence of $p \in X$ such that $g(p) \geq 2 \pi  K(p)$. This further implies Condition 1 by \eqref{leq} in Theorem \ref{main}, since $X$ is hyperbolic
by Proposition \ref {zero}, Part 2).
Therefore, it suffices to prove the theorem by assuming Condition 1.

It is well known that $c_\beta^2  \geq c_B^2$, where $c_B$ is the analytic capacity locally defined as 
 $$
 c_B(p):=\sup \left \{ \left|\frac{dh}{dw}({p})\right| \, : h({p})= 0, \, \,h\text{ is holomorphic and} \, \, |h|\leq1 \right\},
 $$
with respect to a fixed local coordinate $w$ near $p \in X$ such that $w(p)=0$.
 Consider the holomorphic function $H(z):=\sqrt{\frac{g(p)}{2}} T(z)$ on $X$.  Since $H({p})= 0$, $  T^{\prime}(p) =1$ and   $|H(z)| \leq1$ on $X$ by Proposition \ref{zero},  we get
\begin{equation} \label{ineq}
c_B^2(p) \geq   |T^{\prime}(p)|^2  \frac{g(p)}{2} =   \frac{g(p)}{2}. 
\end{equation}

Our assumption and \eqref{ineq} imply that $c_B^2(p) = c^2_{\beta}(p)$.
Using \cite[Theorem A.1] {DTZ} (see also \cite{Min}), we know that $X$ is biholomorphic to a disc possibly less a relatively closed polar set.

\end{proof}

We will prove Theorem \ref{2nd} by using our Theorem \ref{3rd}.

\begin{proof} [\bf  Proof of Theorem \ref{2nd}]

If $X$ is biholomorphic to a disc possibly less a relatively closed polar set, then one may compute the Gaussian curvature directly and find it to be  identically $-4 \pi$.
Conversely,  if the curvature of the Bergman kernel $K(z)|dw|^2$ 
is identically $-4\pi$, then by \eqref{curvK}, $2 \pi K \equiv g $,
which further implies that the Gaussian curvature of the Bergman metric $g (z)|dw|^2$ is identically $-2$.  Thus, by Theorem \ref{3rd}, Part 2, $X$ is biholomorphic to a disc possibly less a relatively closed polar set.

\end{proof}

Under the completeness assumption, we give a proof of Theorem \ref{lower}.
The proof in fact works for any complete metric whose 
Gaussian curvature has a  lower bound $-2$.

\begin{proof} [\bf  Proof of Theorem \ref{lower}] Since the complete Bergman metric on $X$ has Ricci curvature lower bound $-1$, and the Suita metric $c_{\beta}^2(z)|dw|^2$ on $X$ has Gaussian curvature upper bound $-4$, Yau's Schwarz lemma \cite{Yau78} applied to the identity map on $X$ yields that $g  \geq 2 c^2_\beta $.

\end{proof}

The  `only if' part of  Corollary \ref{cor} follows from Theorems \ref{3rd} and \ref{lower}, and 
 the possible relatively closed polar set is empty due to the Bergman completeness.   Alternatively,  the `only if' part could follow from a result of Minda \cite{Min} who proved that   the Poincar\'e metric dominates the Suita metric $c_{\beta}^2(z)|dw|^2$. The converse direction is trivial.

 \medskip

The rest of this section is devoted to the proofs of our results for planar domains in $\mathbb C$.

\begin{proof} [\bf Proof of Theorem \ref{4th}]

By Kellogg's theorem, cf. \cite[Theorem 4.2.5]{R95}, $I= (D \cup I) \cap (\mathbb C \setminus D)$ is a relatively closed polar subset of $D \cup I$, so  the $L^2$ holomorphic functions on $(D \cup I)  \setminus I =D$ will extend holomorphically across $I$. 
Therefore, as domains, $ D \cup I$ and $D$ have the same Bergman kernel and metric. 
 
 Let $J \subset  \partial D$ denote the set of regular boundary points. 
 Then, $\partial D = J \cup I$, where $I$ could possibly be  empty. It is well known that in the one dimensional case the Dirichlet regularity is equivalent to the hyperconvexity.
 By assumption,  the boundary of the domain $ D \cup I$ is exactly $J$, at which the Bergman metric is complete. Since the Gaussian curvature of the Bergman metric on $ D \cup I$  is complete and identically $-2$, Lu's uniformization theorem \cite{Lu} yields that $ D \cup I$ is biholomorphic to a disc. Now the conclusion follows as  the polarity is preserved under biholomorphism.

    \end{proof}

    The so-called $L^2$-domain of holomorphy  is the domain of existence of some $L^2$ holomorphic function, and its  boundary 
   contains no polar part (see \cite{PZ}), so by Theorem \ref{4th} we get

\begin{cor} \label{domain of h} Let $D \subset \mathbb C $ be 
an $L^2$-domain of holomorphy such that the Gaussian curvature of the Bergman metric on $D$ is identically $-2$, and let $I \subset  \partial D$ denote the set of possible irregular boundary points  with respect to the Dirichlet problem.
  If $ D \cup I$ is an open set, then
   $D$ is biholomorphic to a disk.
\end{cor}

 Corollary \ref{domain of h}  improves Corollary 1.4 in \cite{DW} for the case $n=1$ by replacing the Condition ($\star$) there. 
 Any Bergman-complete domain is an $L^2$-domain of holomorphy;  moreover, the converse is not true (see Example \ref{Zal} below).
 Notice that for general planar domains, the assumption $ D \cup I$ being an open set in 
Theorem \ref{4th} and Corollary \ref{domain of h}
does not always hold.

\begin{example} \label{Zal}
Consider the following Zalcman type domain
$$
\mathcal D:= \mathbb D(0, 1) \setminus \left( \bigcup_{k=1}^{\infty} \overline{\mathbb D (x_k, r_k)} \cup \{0\}\right),
$$
where $x_k > x_{k+1}>0$, $\lim_{k\to 0} x_k =0$, $r_k>0,$ $\overline{\mathbb D (x_k, r_k)} \subset \mathbb D(0, 1) \setminus \{0\}$ and $\overline{\mathbb D (x_k, r_k)} \cap \overline{\mathbb D (x_l, r_l)} =\emptyset $ for $k\neq l$.  
One may choose   {(cf. \normalfont\cite{Ju, PZ})} $x_k$ and $r_k$ such that $0$ is the only Bergman-incomplete boundary point with
 $$
     \limsup_{z\to 0} K(z) = \infty  >   \liminf_{z\to 0} K(z).
 $$
Since $
     \limsup_{z\to q} K(z) = \infty
     $
at any $q \in \partial \mathcal  D$, 
a result of Pflug and Zwonek \cite{PZ} says that $\mathcal  D$ is an $L^2$-domain of holomorphy. Moreover, $0$ is the only irregular boundary point of $\mathcal  D$ but $\mathcal  D \cup \{ 0\}$ is not an open set.

\end{example}

  \begin{proof}  [\bf Proof of Theorem \ref{boundary point}]

It suffices to prove the `only if' part.       If there exists $q   \subset \partial D$ such that 
    $$
     \limsup_{z\to q} K(z) < \infty,
     $$
    then a result of Pflug and Zwonek \cite{PZ} says that there exists a neighbourhood $U$ of $q$ such that $ U \setminus D$ is a polar set. Furthermore, $q$ is an irregular boundary point and $\mathcal C=\{q\}$, where $\mathcal C$ is the component of $\partial D$ which contains $q$, cf. \cite[Theorem 4.2.2, Theorem 4.2.3]{R95}. Therefore, all $L^2$ holomorphic functions extend across any such $q$ to the extended domain $\tilde D$, whose Bergman kernel and metric are the same as those of $D$. Since  no  boundary point satisfies \eqref {=>},  $\tilde D$ is Bergman exhaustive,  namely the Bergman kernel function $K(z)$ blowing up to infinity at  $\partial \tilde D$.  By a result of Chen \cite{C00}, we know that the Bergman metric on $\tilde D$ is complete. Since the Gaussian curvature of the complete Bergman metric is identically $-2$, Lu's uniformization theorem \cite{Lu} implies that $\tilde D$ is biholomorphic to a disc.
 Consequently, $ D$ is biholomorphic to a disc possibly less a relatively closed polar set, as  the polarity is preserved under biholomorphism. The proof is then complete in view of Theorem \ref{main}.

\end{proof} 

 Lastly, by the inequality  \eqref{ineq} and a comparison result of Lu \cite{Lu57}, we derive on planar domains the following equivalence of the analytic capacity and the Bergman metric.
 
\begin{cor} \label{equiv} Let $D \subset \mathbb C $ be 
a bounded domain. If the Gaussian curvature of the Bergman metric $g$ on $D$  is identically $-2$, then the analytic capacity $c_B$ on $D$ satisfies
$$
2 c_B^2 \geq    g \geq  c_B^2.
$$ 
 
\end{cor}
 
 Notice that the above first inequality is sharp; it becomes equality when $D$ is biholomorphic to a disc possibly less a relatively closed polar set.
 
\section{Application to disc quotients} \label{Sec6}
We will focus on disc quotients $\Omega:=\mathbb D/\Gamma$, where $\Gamma$ is a finite subgroup
of ${\rm Aut(\mathbb D)}$. Then $\Omega$ is a Stein space with only one isolated singularity (see \cite{ HL, EXX} and the references therein). 
 Let $\kappa_{\Omega} $ and $\omega_\Omega$ be the Bergman kernel and metric on ${\rm Reg}(\Omega)$, respectively. For a local coordinate $w$ on a neighborhood $V \subset {\rm Reg}(\Omega)$, we define
$$
k_{\Omega}(z)|dw|^2 := \kappa_{\Omega}(z)|_{V },\quad  \frac{\partial^2\log k_{\Omega} (z)}{\partial w\partial\overline{w}} |dw|^2 =\omega_{\Omega}(z)|_{V }.
$$
Let $\{\alpha_1, \alpha_2, ... , \alpha_j, ... \}$ be a complete orthonormal basis of the space of $L^2$ holomorphic 1-forms on $\Omega$, and locally write $\alpha_j=a_jdw $.
 Let $\pi: \mathbb D\rightarrow \mathbb D/\Gamma$ be the standard
branched covering map. Then,  $a_j\circ\pi \det \pi^{\prime}=f_j$, and $|\det{\pi^\prime}|^2=1$ since $\Gamma\subset{\rm Aut}~(\mathbb D)$. Thus,
 $
    |a_j\circ \pi|^2=|f_j|^2, \text{ for all } j.
 $
Therefore, by definition
    \begin{equation} \label{Bk}
\pi^\ast k_{\Omega} = \sum_{j=1}^\infty|a_j\circ \pi|^2.
    \end{equation}

On the other hand, write $\pi^\ast \alpha_j=f_j dz$, where $\{f_j\}$ are holomorphic
functions on $\mathbb D\setminus\{0\}$. By the $L^2$ removable singularity
theorem, $\{f_j\}$ can be holomorphically extended to $\mathbb D$.
Moreover, $f_j$ satisfies $$f_j\circ \gamma (z)\det\gamma=f_j(z), \quad
\forall \gamma\in\Gamma,  \forall z\in\mathbb D.$$
Set $A^2_{\Gamma}(\mathbb D):=\{f\in A^2(\mathbb D): f\circ \gamma \det\gamma=f,
\forall \gamma\in \Gamma\}$, which is a closed
 subspace of $A^2(\mathbb D)$. 
 It can be shown (see \cite{HL}) that  $\{\frac{1}{\sqrt{|\Gamma|}} f_j\}_{j=1}^\infty$ is an orthonormal basis of $A^2_{\Gamma}(\mathbb D)$. 
Define the kernel
  $$
    K_{\Gamma}(z, \overline z):=\frac{1}{|\Gamma|}\sum_{j=1}^\infty|f_j(z)|^2.$$
 Then, by \eqref{Bk}, it holds that
   $$
    K_{\Gamma}(z, \overline z) =\frac{1}{|\Gamma|}\pi^\ast k_{\Omega}.
$$
 Consequently,  $\omega_\Gamma:=i\partial\overline\partial \log K_\Gamma(z, \overline z)$ satisfies
 $$
  \omega_\Gamma = \pi^\ast \omega_\Omega.
 $$

\begin{proof} [\bf Proof of Corollary \ref{quotient}] 
For any nontrivial finite subgroup $\Gamma\subset{\rm
Aut}(\mathbb D)$,  assume $|\Gamma|=r \in \mathbb Z \cap [2, \infty) $. 
By  \cite {HL},
$$
K_{\Gamma}(z, \overline w)=\sum_{\gamma\in \Gamma}\frac{K_{\mathbb D}(\gamma z, \overline w)\det\gamma}{|\Gamma|}
=\sum_{k=1}^\infty \frac{rk|z|^{2(kr-1)}}{\pi}= \frac{r |z|^{2(r-1)}}{\pi(1-|z|^{2r})^2},
$$
    where $K_{\mathbb D} $ is the Bergman kernel function on $\mathbb D$. So,
 $$
\pi^\ast k_{\Omega} =   {|\Gamma|} K_{\Gamma}(z, \overline z) =  \frac{r^2 |z|^{2(r-1)}}{\pi(1-|z|^{2r})^2},
$$
and $\pi^\ast \omega_\Omega =     \omega_\Gamma =  i\partial\overline\partial \log K_\Gamma(z, \overline z) =2r^2\frac{|z|^{2(r-1)}}{(1-|z|^{2r})^2} |dw|^2$.
Therefore, 
 $$
 2\pi  (\pi^\ast \kappa_{\Omega}) - \pi^\ast \omega_{\Omega} = \left ( 2\pi  \frac{r^2}{\pi}\frac{|z|^{2(r-1)}}{(1-|z|^{2r})^2}
 - 2r^2\frac{|z|^{2(r-1)}}{(1-|z|^{2r})^2} \right) |dw|^2=0,$$
 which implies that  $2 \pi \kappa_{\Omega} \equiv \omega_{\Omega} $. By Theorem \ref{2nd}, ${\rm Reg}(\Omega)$ is biholomorphic to $\mathbb D \setminus P$, where $P$ is a relatively closed polar set. Since $\Gamma \neq \{id\}$, the set $P$ cannot be empty.

On the other hand, since $\Omega$ has only one isolated singularity, it holds that (cf. \cite[Proposition 1.40]{Ha} )
$$
 \pi_1(\mathbb D \setminus P  ) =  \pi_1({\rm Reg}(\Omega))  =  \pi_1( ( \mathbb D \setminus \{0\} ) / \Gamma ) = \mathbb Z = \pi_1(  \mathbb D \setminus \{0\}    ).
$$ 
 Therefore, the set $P$ only contains a single point. After a possible M\"obius transform, one concludes that $ {\rm Reg}(\Omega)  $ is biholomorphic to $\mathbb D \setminus \{0\}.$

    \end{proof}

Corollary \ref{quotient} strengthens Proposition 5.6 in \cite{HL}, as the Bergman metric on any Riemann surface that is bibiholomorphic to $\mathbb D \setminus \{0\} $ is K\"{a}hler-Einstein.

\subsection*{Funding}

{\fontsize{11.5}{10}\selectfont

The research of the author is supported by AMS-Simons travel grant. This work was supported by the Ideas Plus grant 0001/ID3/2014/63 of the Polish Ministry of Science and Higher Education.
}

\subsection*{Acknowledgements} 
{\fontsize{11.5}{10}\selectfont

The author sincerely thanks Zbigniew B\l{}ocki for suggesting problems around Suita's conjecture. He is also grateful to Professors Bo Berndtsson, \.Zywomir Dinew, L\'aszl\'o Lempert, Song-Ying Li, Takeo Ohsawa and Bun Wong for the valuable communications.}

\bibliographystyle{alphaspecial}

\fontsize{11}{9}\selectfont

\vspace{0.5cm}

\noindent dong@uconn.edu, 

\vspace{0.2 cm}

\noindent Department of Mathematics, University of Connecticut, Storrs, CT 06269-1009, USA

 \end{document}